\theoremstyle{plain}
 \newtheorem{theorem}{Theorem}[section]
 \newtheorem{proposition}[theorem]{Proposition}
 \newtheorem{lemma}[theorem]{Lemma}
 \newtheorem{corollary}[theorem]{Corollary}
\theoremstyle{definition}
 \newtheorem{definition}[theorem]{Definition}
\theoremstyle{remark}
 \newtheorem{remark}[theorem]{Remark}
\begin{document}
\title[On the rationality of the moduli space of L\"uroth quartics]{On the rationality of the moduli space of L\"uroth quartics} 

\author{Christian B\"ohning\\ 
 Hans-Christian Graf von Bothmer}

\maketitle

\newcommand{\PP}{\mathbb{P}} 
\newcommand{\QQ}{\mathbb{Q}} 
\newcommand{\ZZ}{\mathbb{Z}} 
\newcommand{\CC}{\mathbb{C}} 
\newcommand{\rmprec}{\mathrm{prec}}
\newcommand{\rmconst}{\mathrm{const}}
\newcommand{\xycenter}[1]{\begin{center}\mbox{\xymatrix{#1}}\end{center}} 

\newboolean{xlabels} 
\newcommand{\xlabel}[1]{ 
                        \label{#1} 
                        \ifthenelse{\boolean{xlabels}} 
                                   {\marginpar[\hfill{\tiny #1}]{{\tiny #1}}} 
                                   {} 
                       } 
\setboolean{xlabels}{false} 

\begin{abstract}
We prove that the moduli space $\mathfrak{M}_L$ of L\"uroth quartics in $\PP^2$, i.e. the space of quartics which can be circumscribed
around a complete pentagon of lines modulo the action of $\mathrm{PGL}_3 (\CC )$ is rational, as is the related moduli space of Bateman
seven-tuples of points in $\PP^2$.
\end{abstract}

\section[Introduction]{Introduction}
The $13$-dimensional hypersurface in the parameter space $\PP^{14}$ of plane quartics consisting of the closure of the locus of quartics which
have an inscribed complete pentagon of lines, the L\"uroth quartics, is classically called the L\"uroth hypersurface and known to be irreducible,
$\mathrm{SL}_3 (\CC )$-invariant, and of degree $54$. In this paper we prove the rationality of the moduli space of L\"uroth quartics, i.e. of
the corresponding $\mathrm{SL}_3 (\CC )$-quotient of the L\"uroth hypersurface. Morley \cite{Mor} described L\"uroth quartics as branch curves of
degree $2$ rational self-maps of $\PP^2$ called Geiser involutions. These involutions are determined by a net of cubics in $\PP^2$ through seven
distinct points in $\PP^2$ no six of which lie on a conic. Thus we get a map from such seven tuples of points to quartics, and certain configurations of points, called \emph{Bateman configurations}, lead to L\"uroth quartics. Morley proves that
there are generically $8$ Bateman configurations leading to a L\"uroth quartic. It is shown below that also the moduli space of Bateman seven
tuples of points is rational. 

\

We would like to thank Edoardo Sernesi for bringing these interesting geometric questions to our attention and for helpful comments
on the paper.

\section[L\"uroth quartics and Bateman points]{L\"uroth quartics and bateman points}

\begin{definition}\xlabel{dLuroth}

A quartic curve $L \subset \PP^2$ is called a {\sl L\"uroth quartic} if there exist $ l_1, \dots , l_5 \in (\CC^3)^{\vee }$ such that $\cup_{i \neq
j } ((l_i = 0) \cap (l_j = 0))$ consists of $10$ points contained in $L$. We denote by $\mathfrak{P}_L$ the subset inside the parameter
space $\PP^{14}$ of plane quartic curves corresponding to L\"uroth quartics. 

A quartic curve $C$ is called a {\sl Clebsch quartic} if there exist $ l_1, \dots , l_5 \in (\CC^3)^{\vee }$ such that $C = \{ l_1^4 + \dots + l_5^4
= 0 \}$. As in the previous item we denote by $\mathfrak{P}_C$ the subset of Clebsch quartics inside the space $\PP^{14}$ of all
plane quartics.

\end{definition}

There is an $\mathrm{SL}_3 (\CC )$-equivariant map $S_4 \, :\, \mathfrak{P}_C \to \mathfrak{P}_L$ called the \emph{Scorza map} which
 may be written symbolically as
\[
S_4 = (\alpha \beta \gamma ) (\alpha \beta \delta ) (\alpha \gamma \delta ) (\beta \gamma \delta ) \alpha_x \beta_x \gamma_x \delta_x \, .
\]
We refer to \cite{G-Y} for an account of the symbolical notation. It is known (cf. \cite{Dol2}, section 6.4.1) that $S_4$ induces a
generically $1:1$ map
\[
S_4 \, : \, \mathfrak{P}_C \to \mathfrak{P}_L
\]
so that $\mathfrak{M}_L = \mathfrak{P}_L / \mathrm{SL}_3 (\CC )$ and $\mathfrak{M}_C = \mathfrak{P}_C /\mathrm{SL}_3 (\CC )$ are
birational. 

\begin{theorem}\xlabel{tMain}
The space $\mathfrak{M}_C$, hence also $\mathfrak{M}_L$, is rational.
\end{theorem}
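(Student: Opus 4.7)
My plan is to use apolarity to reduce the rationality of $\mathfrak{M}_C$ to the rationality of the moduli space of binary octics. Write $V = \CC^3$ and consider the apolarity pairing $\mathrm{Sym}^2 V \otimes \mathrm{Sym}^4 V^\vee \to \mathrm{Sym}^2 V^\vee$, $(q,f) \mapsto q \cdot f$, in which $q$ acts as a constant-coefficient second-order differential operator on $f$. A classical apolarity result states that $f$ is a Clebsch quartic if and only if there is a non-zero conic $q \in \mathrm{Sym}^2 V$ with $q \cdot f = 0$: for a Waring decomposition $f = l_1^4 + \cdots + l_5^4$ the required $q$ is the unique conic through the five points $[l_i ] \in \PP (V^\vee )$, and conversely such a $q$ determines $f$ as a sum of five fourth powers. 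In particular, the apolar conic of a generic Clebsch $f$ is unique up to scalar, so the incidence variety
\[
 W \;=\; \bigl\{\, (q,f) \in \PP (\mathrm{Sym}^2 V ) \times \PP (\mathrm{Sym}^4 V^\vee ) \; : \; q \cdot f = 0 \,\bigr\}
\]
projects birationally and $\mathrm{SL}_3 (\CC )$-equivariantly onto $\mathfrak{P}_C$ via $(q,f) \mapsto f$.

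The other projection $\pi \colon W \to \PP^5 = \PP (\mathrm{Sym}^2 V )$ is a $\PP^8$-bundle over the open set of smooth conics (its fiber is the $9$-dimensional space of quartics apolar to the given conic). Since $\mathrm{SL}_3 (\CC )$ acts on $\PP^5$ with the smooth conics forming a single dense orbit of stabilizer $\mathrm{Aut}(q_0 ) \cong \mathrm{PGL}_2 (\CC )$, the slice method (equivalently, $W$ restricted to the smooth locus is the associated bundle $\mathrm{SL}_3 (\CC ) \times^{\mathrm{PGL}_2 (\CC )} \pi^{-1}(q_0 )$) yields
\[
 \mathfrak{M}_C \;=\; \mathfrak{P}_C / \mathrm{SL}_3 (\CC ) \;\sim\; \pi^{-1}(q_0 ) / \mathrm{PGL}_2 (\CC )
\]
birationally. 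Restriction of quartics to $q_0 \cong \PP^1$ furnishes a $\mathrm{PGL}_2 (\CC )$-equivariant isomorphism of $\pi^{-1}(q_0 )$ with $\PP (H^0 (\mathcal{O}_{\PP^1}(8))) = \PP^8$; equivalently, viewing $V$ as the $\mathrm{SL}_2$-module $\mathrm{Sym}^2 \CC^2$ under the inclusion $\mathrm{PGL}_2 \subset \mathrm{PGL}_3$ given by the stabilizer of $q_0$, the plethysm
\[
 \mathrm{Sym}^4 \mathrm{Sym}^2 \CC^2 \;=\; \mathrm{Sym}^8 \CC^2 \;\oplus\; \mathrm{Sym}^4 \CC^2 \;\oplus\; \CC
\]
shows that $\ker (q_0 \cdot )$ is precisely the irreducible summand $\mathrm{Sym}^8 \CC^2$.

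This reduces the rationality of $\mathfrak{M}_C$ to the rationality of the moduli space of binary forms of degree $8$, namely $\PP (\mathrm{Sym}^8 \CC^2 ) / \mathrm{PGL}_2 (\CC )$. The main obstacle is precisely this last step: the rationality of the moduli space of binary octics is a non-trivial theorem of Bogomolov and Katsylo. For a self-contained proof the bulk of the work will be here, presumably along the lines of Katsylo's double-bundle method, which constructs an auxiliary $\mathrm{SL}_2$-covariant replacing the $\mathrm{PGL}_2$-quotient of $\PP^8$ by a quotient of a linear space that can be handled via the no-name lemma.
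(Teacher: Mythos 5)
Your proposal is correct and follows essentially the same route as the paper: both identify $\mathfrak{P}_C$ generically with the catalecticant/apolarity locus, fibre it $\mathrm{SL}_3(\CC)$-equivariantly over $\PP(\mathrm{Sym}^2 \CC^3)$ via the apolar conic, use the dense orbit with stabilizer $\mathrm{SO}_3(\CC)\simeq\mathrm{PGL}_2(\CC)$ to reduce to the generic fibre, identify that fibre with $\PP(\mathrm{Sym}^8\CC^2)$ by the plethysm $\mathrm{Sym}^4\mathrm{Sym}^2\CC^2=\mathrm{Sym}^8\CC^2\oplus\mathrm{Sym}^4\CC^2\oplus\CC$, and invoke Bogomolov--Katsylo for the rationality of binary octics. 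No substantive differences.
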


\begin{proof}
By \cite{Dol2}, section 6.4.1, $\mathfrak{P}_C$ is an open subset of the locus $\mathcal{K} \subset \PP^{14} = \PP (\mathrm{Sym}^4
(\CC^3)^{\vee })$ given by the vanishing of the catalecticant invariant of quartics which is a degree $6$ polynomial in the coefficients
$A_{\underline{i}}$ of the quartic and defined as follows: for a quartic $C$ it is the determinant of the linear map
\begin{gather*}
l_C \, : \, \mathrm{Sym}^2 (\CC^3 )  \to \mathrm{Sym}^2 (\CC^3)^{\vee }
\end{gather*}
given by acting on $C = ( ax_0^4 + bx_0^3x_1 + \dots )$ with a dual conic $Q \in  \mathrm{Sym}^2 (\CC^3 )$, $Q = A
\frac{\partial^2}{\partial x_0^2 } + B \frac{\partial^2}{\partial x_0 x_1 } + \dots $, where we use dual bases in $\mathrm{Sym}^2 (\CC
)^3$ and $\mathrm{Sym}^2 ((\CC^3)^{\vee }) \simeq (\mathrm{Sym}^2 (\CC^3))^{\vee }$. We may view $l_C$ as induced by the multiplication
map
\[
m \, : \, \mathrm{Sym}^2 (\CC^3) \times \mathrm{Sym}^2 (\CC^3) \to \mathrm{Sym}^4 (\CC^3) \, .
\]
Thus in summary $\mathfrak{M}_L$ is birational to $\mathcal{K} /\mathrm{SL}_3 (\CC )$. By its definition, $\mathcal{K}$ fibres in an
$\mathrm{SL}_3 (\CC )$-equivariant way:
\begin{gather*}
\begin{CD}
\mathcal{K}\\
@V{\pi }VV \\
\PP (\mathrm{Sym}^2 (\CC^3))
\end{CD}
\end{gather*}
associating to a generic $C$ the kernel $\mathrm{ker} (l_C)$. The space $\PP (\mathrm{Sym}^2 (\CC^3))$ has a $(\mathrm{PSL}_3 (\CC ) , \:
\mathrm{SO}_3 (\CC ))$-section, namely a point. Hence $\mathcal{K} / \mathrm{SL}_3 (\CC )$ is birational to $F/\mathrm{SO}_3 (\CC )$
where $F$ is the generic fibre of $\pi $. In view of the isomorphism $\mathrm{SO}_3 (\CC ) \simeq \mathrm{PSL}_2 (\CC )$, it remains to
see that
\begin{gather}
F \simeq \PP ( \mathrm{Sym}^4 (\CC^3) / \mathrm{Sym}^2 (\CC^3) ) \simeq \PP ( V(8))
\end{gather}
as $\mathrm{PSL}_2 (\CC )$-space where $V(n)$ is the representation of this group on the space of binary forms of degree $n$ for
$n\in\mathbb{N}$. Note $\mathrm{Sym}^4 (\CC^3) \simeq V(8) \oplus V(4)  \oplus V(0)$, $\mathrm{Sym}^2 (\CC^3) = V(4)  \oplus V(0)$.\\
For once (1) is established we get the rationality of $\mathfrak{M}_L$ from that of $\PP (V(8)) /\mathrm{PSL}_2 (\CC )$ (for a proof of
the latter see \cite{Bogo2}, \cite{Kat83}, \cite{Kat84}).\\
To see that (1) holds note that the fibre over $[Q]$ of $\pi$ consists of those quartics $C$ such that $Q$ is in the kernel of $l_C$, or
equivalently of those linear forms on $\mathrm{Sym}^4 (\CC^3)$ which vanish on the image of $\mathrm{Sym}^2 (\CC^3)$ inside
$\mathrm{Sym}^4 (\CC^3)$ under the map $m( Q, \cdot )$. Hence the claim.
\end{proof}

We turn to the proof of rationality for the moduli space of Bateman $7$-tuples of points, see \cite{O-S} for background information.

\

Consider $2\times 3$-matrices
\begin{gather}\label{points}
\left( \begin{array}{ccc} l_1 & l_2 & l_3 \\ q_1 & q_2 & q_3 \end{array}  \right)
\end{gather}
where the $l_i$ are linear, and the $q_i$ quadratic forms on $\PP^2$, and remark that seven general points in $\PP^2$ define a matrix
of type (2) uniquely as syzygy matrix, and conversely a general matrix of type (\ref{points}) determines seven points in
$\PP^2$ as its rank $1$ locus. This can be made precise by defining $\mathbb{M}$ as the projective space of all matrices of type (\ref{points})
which is acted on by the group
\[
\mathcal{G} = \mathrm{SL} (V) \times \mathrm{SL} (W) \times G
\]
where $\PP^2 = \PP (V)$ and $\mathrm{SL} (V)$ corresponds to coordinate changes, $W\simeq \CC^3$ is the space of columns so that $\mathrm{SL}
(W)$ corresponds to column operations, and $G$ is the group of row operations (adding a linear form times the
first row to the second row). Then $\mathbb{M} / \mathcal{G}$ is a birational model for the moduli space of seven-tuples of points in $\PP^2$.

\

Now seven general points in $\PP^2$ can also be obtained as zero locus of a general global section in $\mathcal{T}_{\PP^2} (1)$,
$\mathcal{T}_{\PP^2}$ the tangent bundle on $\PP^2$, and by Bott's theorem $H^0 (\mathcal{T}_{\PP^2} (1))= V(1, 2)$, denoting by $V(a, \: b)$ the
irreducible representation of $\mathrm{SL} (V)$ whose highest weight has numerical labels $a,
\: b$ (so e.g. $V(1, \: 0) =V$, $V(0, \: 1) = V^{\ast }$). Thus we may also view $V(1, 2)$ as a parameter space of seven points in $\PP^2$ and
we can relate this to the preceding matrix picture as follows: the $\mathcal{G}$-representation corresponding to the space of matrices of type
(\ref{points}) is
\[
V^{\ast } \otimes W + S^2 V^{\ast } \otimes W.
\]
Acting by an element of $\mathcal{G}$ we may achieve that a general matrix of type (\ref{points}) can be put in the
form 
\begin{gather}\label{normalform}
\left(
\begin{array}{ccc}
x & y & z \\
q_1 & q_2 & q_3
\end{array}
\right)\, .
\end{gather}
The stabilizer of matrices of this form inside $\mathcal{G}$ is isomorphic to $\mathrm{SL}(V) \times G$. This means that we now have an
isomorphism $\mathrm{SL} (V) \simeq \mathrm{SL} (W)$ and as $\mathrm{SL} (V)$-representation the full space of matrices decomposes
\begin{gather}\label{decomposition}
V^{\ast}\otimes V + S^2 V^{\ast } \otimes V = (V(0, \: 0) + V(1, \: 1)) + (V(1, \: 2) + V(0, \: 1))
\end{gather} 
where $V(0, \: 0)$ corresponds to our choice of $(x, \: y, \: z)$ in the first row. Acting with an element of the group of row
operations $G$, we see that the $\mathcal{G}$-action on the space of all matrices of type (\ref{points}) has an $\mathrm{SL} (V)$-section
which is $V(1, 2)$. Thus we see the birational isomorphism between $\mathbb{M} / \mathcal{G}$ and $\PP (V(1, 2)) /\mathrm{SL} (V)$ explicitly.

In other words, after the choice of the section (3), the space of matrices decomposes as in (4), and the points a
matrix defines may be seen as the $V(1,2 )$-component with respect to this decomposition.

\begin{definition}\xlabel{dBatemanpoints}
An unordered collection of seven points $P_1, \dots , P_7$ in $\PP^2$ is said to be a \emph{Bateman seven-tuple of points} in $\PP^2$ if it is
defined by a matrix of type (\ref{points}) where the $l_i$ are the partial derivatives of a quadratic form $Q$ on $\PP^2$ and the $q_i$ the partial
derivatives of a cubic form $C$ on $\PP^2$. We denote their moduli space for projective equivalence by $\mathfrak{M}_B$.
\end{definition}

\begin{remark}\xlabel{rMorleyCover}
Morley \cite{Mor} showed that $\mathfrak{M}_B$ is an eight to one cover of $\mathfrak{M}_L$ via taking the branch curve of the Geiser involution
determined by the Bateman points. See also \cite{O-S}.
\end{remark}

What is the representation-theoretic meaning of the concept of Bateman points?

\begin{lemma}\xlabel{lBateman}
There is a unique bilinear map 
\[
b \, : \, V(2, \: 0) \otimes V(0,\: 3) \to V(1, \: 2)
\]
and the image of the pure tensors in $V(2, \: 0) \otimes V(0,\: 3)$ under $b$ may be identified with the Bateman seven-tuples of points. More
precisely, let the Bateman seven-tuple be defined by $Q$ and $C$ as in Definition \ref{dBatemanpoints}. Let $Q^{\ast } \in V(2, 0)$ be the
quadric dual to $Q$ (recall that there is a degree $2$ equivariant map $V(0, 2) \to V(2, 0)$ which corresponds to taking the adjugate of a
symmetric matrix). Then $b(Q^{\ast }, C) \in V(1, 2)$ is the associated Bateman seven tuple, using the correspondence between matrices and point
sets encoded in the decomposition \ref{decomposition}. 
\end{lemma}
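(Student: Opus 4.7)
The plan is to separate the statement into uniqueness (a Schur-type argument) and the explicit identification (a chase through the normal form reduction leading to (\ref{normalform})). For uniqueness I would decompose the tensor product into $\mathrm{SL}(V)$-irreducibles: by the Littlewood--Richardson rule for $\mathrm{SL}_3$, or using the formula $V(a,0)\otimes V(0,b) = \bigoplus_{k=0}^{\min(a,b)} V(a-k,b-k)$, one gets
\begin{equation*}
V(2,0)\otimes V(0,3) \;\simeq\; V(2,3) \oplus V(1,2) \oplus V(0,1),
\end{equation*}
and the dimensions $42 + 15 + 3 = 60 = 6\cdot 10$ check. Since $V(1,2)$ appears with multiplicity one, Schur's lemma forces $b$ to be the projection to this summand, unique up to scalar; in particular it is nonzero, so it remains to identify $b(Q^{\ast}, C)$ with the Bateman seven-tuple attached to $(Q,C)$.

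For the identification I would retrace the normalization leading to (\ref{normalform}) starting from the Bateman matrix with first row $\nabla Q = (l_1, l_2, l_3)$ and second row $\nabla C = (q_1, q_2, q_3)$. Writing $A$ for the symmetric matrix of $Q$, so that $(l_1, l_2, l_3) = (x, y, z)\cdot A$, the column operation bringing the first row to $(x, y, z)$ is multiplication on the right by $A^{-1}$, which, up to the scalar $\det(A)^{-1}$, equals the adjugate of $A$ --- and the adjugation $Q \mapsto Q^{\ast}$ is precisely the equivariant degree-two map $V(0,2) \to V(2,0)$ mentioned in the statement. The second row then transforms to $q'_i = \sum_j (Q^{\ast})_{ij}\,\partial_j C$, which in the language of (\ref{decomposition}) is the tensor
\begin{equation*}
\sum_{i,j} (Q^{\ast})_{ij}\,\partial_i \otimes \partial_j C \;\in\; V\otimes S^2 V^{\ast} = V(1,2)\oplus V(0,1).
\end{equation*}
The remaining row operation from $G$ kills the $V(0,1)$-summand, so the projection of the above tensor into $V(1,2)$ is exactly the element of $V(1,2)$ corresponding, under (\ref{decomposition}), to the Bateman seven-tuple. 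This projection is manifestly bilinear in $(Q^{\ast}, C)$ and $\mathrm{SL}(V)$-equivariant, hence coincides with $b$ up to scalar by the uniqueness from the first paragraph.

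The main subtlety I expect is the appearance of the dual quadric $Q^{\ast} \in V(2,0)$ rather than $Q$ itself: this is dictated by the inversion of $A$ in the column-normalization step, the only $\mathrm{SL}(V)$-equivariant way of doing so being to take the adjugate, so the factor $\det(A)^{-1}$ that appears in $A^{-1}$ is harmless at the level of $\PP(V(1,2))$. One should also verify that the map defined by the closed formula above extends polynomially (not merely rationally) across the locus $\det A = 0$, but this is clear from the fact that the only dependence on $Q$ in the formula is through $Q^{\ast}$, so uniqueness plus the explicit bilinear expression guarantee that $b$ is defined on the whole of $V(2,0) \otimes V(0,3)$.
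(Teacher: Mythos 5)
Your proof is correct and takes essentially the same route as the paper: the paper's argument also contracts the gradient matrix with the dual quadric $Q^{\ast}$ (via the equivariant derivative maps $d_2$, $d_3$), observes that $d_2(Q^{\ast},Q)$ lands in $V(0,0)$ --- your normalization of the first row to $(x,y,z)$ --- and reads off the Bateman tuple as the $V(1,2)$-component of $d_3(Q^{\ast},C)$ modulo the row-operation summand $V(0,1)$. The only difference is that you spell out the multiplicity-one decomposition $V(2,0)\otimes V(0,3)\simeq V(2,3)\oplus V(1,2)\oplus V(0,1)$ underlying the uniqueness of $b$, which the paper leaves implicit.
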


\begin{proof}
Let us start with $Q$ and $C$ and consider the unique equivariant maps (\emph{derivatives})
\begin{gather*}
d_2 \, :\, V(2, \: 0) \otimes V(0, \: 2) \to V(1,\: 1) + V(0, \: 0) , \\
d_3 \, :\, V(2, \: 0) \otimes V(0, \: 3) \to V(1, \: 2) + V(0, \: 1)\, .
\end{gather*}
Then the matrix of the partials of $Q$ and $C$ is, in the identification made in decomposition \ref{decomposition}, nothing but the pair 
\[
(d_2 (Q^{\ast } , \: Q ) , \; d_3 (Q^{\ast }, \: C)) \, .
\]
In fact one calculates that $d_2 (Q^{\astÊ}, Q) \in V(0,\: 0)$ (it suffices to check this for a particular quadric by equivariance). Note
that
\begin{gather*}
d_2 (Q^{\ast } , Q ) = \sum_{i=1 }^{3} \frac{\partial}{\partial e_i} (Q^{\ast }) \otimes \frac{\partial }{\partial x_i} (Q) , \\
d_3 (Q^{\ast } , C ) = \sum_{i=1 }^{3} \frac{\partial}{\partial e_i} (Q^{\ast }) \otimes \frac{\partial }{\partial x_i} (C)
\end{gather*}
where $e_i$ and $x_j$ are dual coordinates in $\CC^3$ and $(\CC^3)^{\ast }$. So the whole point of taking derivatives with respect to the quadric
$Q^{\ast }$ dual to $Q$ is to obtain a matrix in normal form \label{normalform} to achieve consistency with the choice of section made there.
This proves the Lemma.
\end{proof}

Now consider the isomorphism
\[
m \, : \, \Lambda^2 V(2, \: 0) \to V(2, \: 1)\, .
\]
This may be interpreted as a natural skew symmetric matrix $M$ on the space $V(2, \: 0)$ with entries linear forms on the space $V(1, \: 2)$ of
points. Now the map $V(2,\: 0)\otimes V(2, \: 0) \to V(2, \: 1)$ given by $m$ is just the basic map $\vartheta$ on page 252 of \cite{BvB10} whence in the
basis
\begin{gather*}
\left\{\frac{1}{2} {e}_{1}^{2},\: {e}_{1} {e}_{2},\: {e}_{1} {e}_{3},\: \frac{1}{2}
      {e}_{2}^{2},\: {e}_{2} {e}_{3},\: \frac{1}{2} {e}_{3}^{2}\right\}
\end{gather*}
of $V(2, \: 0)$ a computation with e.g. \ttfamily Macaulay 2\rmfamily $\:$yields
\begin{gather}\label{FormulaM}
M = \left(\begin{smallmatrix}0&
        {\delta}_{311}&
        {-{\delta}_{211}}&
        {\delta}_{321}&
        -{\delta}_{221}+{\delta}_{331}&
        {-{\delta}_{231}}\\
        {-{\delta}_{311}}&
        0&
        {\delta}_{111}-{\delta}_{221}-{\delta}_{331}&
        {\delta}_{322}&
        {\delta}_{121}-{\delta}_{222}+{\delta}_{332}&
        {\delta}_{131}-{\delta}_{232}\\
        {\delta}_{211}&
        -{\delta}_{111}+{\delta}_{221}+{\delta}_{331}&
        0&
        -{\delta}_{121}+{\delta}_{332}&
        -{\delta}_{131}-{\delta}_{232}+{\delta}_{333}&
        {-{\delta}_{233}}\\
        {-{\delta}_{321}}&
        {-{\delta}_{322}}&
        {\delta}_{121}-{\delta}_{332}&
        0&
        {\delta}_{122}&
        {\delta}_{132}\\
        {\delta}_{221}-{\delta}_{331}&
        -{\delta}_{121}+{\delta}_{222}-{\delta}_{332}&
        {\delta}_{131}+{\delta}_{232}-{\delta}_{333}&
        {-{\delta}_{122}}&
        0&
        {\delta}_{133}\\
        {\delta}_{231}&
        -{\delta}_{131}+{\delta}_{232}&
        {\delta}_{233}&
        {-{\delta}_{132}}&
        {-{\delta}_{133}}&
        0\\
        \end{smallmatrix}\right)\, .
\end{gather}

Here we abbreviated $\delta_{ijk} = \frac{\partial}{\partial x_i}\frac{\partial}{\partial e_j}\frac{\partial}{\partial e_k}$.

The point is that \emph{this matrix drops rank on Bateman seven tuples of points}. In other words they are contained in the associated
Pfaffian divisor $D \subset \PP V(1, \: 2)$:

\begin{proposition}\xlabel{pMorleyZeroOnBateman}
Consider the map 
\[
a \, : \, V(2, \: 0) \otimes V(1, \: 2) \mapsto V (0, \: 2)
\]
induced by $m$. Then for given $Q\in V(0, \: 2)$ and $C \in V(0, \: 3)$, hence given Bateman seven-tuple $b (Q^{\ast }, \: C) \in V(1, \: 2)$,
there is a dual quadric $Q'\in V(2, \: 0)$ in the kernel of our skew matrix, i.e. with $a(Q', \: b(Q^{\ast }, \: C)) =0$. In fact, one may take
$Q' = Q^{\ast }$. 
\end{proposition}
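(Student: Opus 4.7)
The plan is to verify the identity $a(Q^*, b(Q^*, C)) = 0$ by a representation-theoretic argument, avoiding direct manipulation of the explicit skew matrix $M$ in \eqref{FormulaM}. I would begin by noting that the composition
\[
\Psi \, : \, V(2, 0) \otimes V(2, 0) \otimes V(0, 3) \xrightarrow{\mathrm{id} \otimes b} V(2, 0) \otimes V(1, 2) \xrightarrow{a} V(0, 2)
\]
is $\mathrm{SL}(V)$-equivariant, and the desired claim amounts to saying that $\Psi$ vanishes on the symmetric diagonal in the two $V(2, 0)$-factors, or equivalently that the induced equivariant map
\[
\bar{\Psi} \, : \, S^2 V(2, 0) \otimes V(0, 3) \to V(0, 2)
\]
is identically zero. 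By Schur's lemma, it therefore suffices to show that $V(0, 2)$ does not appear as a constituent in the source.

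I would then decompose the source using standard rules for $\mathrm{SL}_3(\CC)$. One has $S^2 V(2, 0) = V(4, 0) \oplus V(0, 2)$, and the Clebsch--Gordan identities $V(a, 0) \otimes V(0, b) = \bigoplus_{k = 0}^{\min(a, b)} V(a - k, b - k)$ together with $V(0, a) \otimes V(0, b) = \bigoplus_{k = 0}^{\min(a, b)} V(k, a + b - 2k)$ give
\[
V(4, 0) \otimes V(0, 3) = V(4, 3) \oplus V(3, 2) \oplus V(2, 1) \oplus V(1, 0),
\]
\[
V(0, 2) \otimes V(0, 3) = V(0, 5) \oplus V(1, 3) \oplus V(2, 1).
\]
A dimension check ($90 + 42 + 15 + 3 = 150$ and $21 + 24 + 15 = 60$) confirms completeness, and $V(0, 2)$ is visibly absent from the list. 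Hence $\bar{\Psi} \equiv 0$, proving the proposition. Since $M$ is moreover $6 \times 6$ and skew-symmetric, the existence of the single kernel vector $Q^*$ forces the Pfaffian of $M$ to vanish at $b(Q^*, C)$, automatically placing the Bateman point on the divisor $D$.

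The main obstacle I anticipate is ensuring the Clebsch--Gordan decomposition above is complete: a single overlooked $V(0, 2)$-isotypic summand in $S^2 V(2, 0) \otimes V(0, 3)$ would invalidate the Schur-type argument. As a robust fallback, one may exploit $\mathrm{SL}(V)$-equivariance to specialise $Q$ to a fixed nondegenerate form such as $x_1^2 + x_2^2 + x_3^2$ and verify $a(Q^*, b(Q^*, C)) = 0$ symbolically for a generic cubic $C$ using the explicit matrix \eqref{FormulaM} in \emph{Macaulay 2}.
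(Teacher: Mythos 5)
Your proof is correct and rests on exactly the same representation-theoretic fact as the paper's, namely that $V(0,2)$ does not occur in $S^2 V(2,0) \otimes V(0,3)$; the paper packages this as the commutativity of a diagram factoring $a \circ (\mathrm{id}\otimes b)$ through the anti-symmetrization $\Lambda^2 V(2,0) \otimes V(0,3)$ followed by a map $e$ to $V(0,2)$, while you apply Schur's lemma directly to the symmetric part, which is a slightly more streamlined rendering of the same argument. Your explicit Clebsch--Gordan decompositions and dimension counts ($90+42+15+3$ and $21+24+15$) correctly substantiate the step that the paper only cites to the Littlewood--Richardson rule.
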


\begin{proof}
Look at the diagram
\[
\begin{CD}
V(2,0) \otimes (V(2, 0) \otimes V(0,3)) @>{\mathrm{id}\otimes b}>> V(2,0) \otimes V(1,2)\\
@V{(m\circ \alpha )\otimes \mathrm{id}}VV    @V{a}VV\\
V(2,1) \otimes V(0, 3)   @>{e}>>       V(0, 2)  
\end{CD}
\]
(where $\alpha \, :\, V(2,0) \otimes V(2, 0) \to \Lambda^2 V(2, 0)$ is simply anti-symmetrization). One checks
\begin{itemize}
\item
The map $e$ exists and is nontrivial since $V(0,2)$ is contained in $V(2,1) \otimes V(0, 3)$.
\item
The diagram commutes since there is just one non-zero map (up to a factor) from $V(2,0) \otimes (V(2, 0) \otimes V(0,3))$ to $V(0, 2)$: the
summand $V(0, 2)$ is contained with multiplicity $1$ in $V(2,0) \otimes (V(2, 0) \otimes V(0,3))$. To check this it suffices to see that $V(0,2)$
is not contained in $S^2 V(2, 0) \otimes V(0, 3)$ by the Littlewood-Richardson rule. 
\end{itemize}
Thus we can calculate $a(Q', \: b(Q^{\ast }, \: C))$ in another way now:
\begin{gather*}
a(Q', \: b(Q^{\ast }, \: C)) = e ( ( m\circ \alpha ) (Q' , Q^{\ast}), \: C)
\end{gather*}
which is zero for $Q' = Q^{\ast }$.
\end{proof}

\begin{corollary}\xlabel{cDimBateman}
Bateman seven-tuples form a $13$-dimensional family in $\PP V(1, 2)$. So there is an irreducible component of the Pfaffian hypersurface $D$, say
$K$, in which Bateman seven tuples are dense. 
\end{corollary}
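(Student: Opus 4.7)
The plan is to establish both an upper and a lower bound of $13$ on the dimension of the locus of Bateman seven-tuples in $\PP V(1,2)$, and then to combine this with irreducibility to deduce density in a component of $D$. For the upper bound I would invoke Proposition~\ref{pMorleyZeroOnBateman}: the Bateman locus sits inside the Pfaffian hypersurface $D \subset \PP V(1,2)$, which is cut out by a single (Pfaffian) polynomial in a $14$-dimensional projective space, so each of its irreducible components has dimension $13$.

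For the lower bound I would pass to the $\mathrm{PGL}_3(\CC)$-quotient and invoke Remark~\ref{rMorleyCover}: the map $\mathfrak{M}_B \to \mathfrak{M}_L$ is generically $8$-to-$1$, so $\dim \mathfrak{M}_B = \dim \mathfrak{M}_L$. The L\"uroth hypersurface $\mathfrak{P}_L$ is $13$-dimensional in $\PP^{14}$ and $\mathrm{PGL}_3(\CC)$ acts on it with trivial generic stabilizer, giving $\dim \mathfrak{M}_L = 13-8=5$, hence $\dim \mathfrak{M}_B = 5$. Since $\mathrm{PGL}_3(\CC)$ likewise acts with trivial generic stabilizer on the Bateman locus in $\PP V(1,2)$ (seven general points admit no nontrivial projective automorphism), the Bateman locus itself has dimension $5+8=13$. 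Irreducibility is then immediate: by Lemma~\ref{lBateman}, the Bateman locus is the closure of the image of
\[
V(0,2)\times V(0,3)\longrightarrow \PP V(1,2),\qquad (Q,C)\mapsto [b(Q^{\ast},C)],
\]
whose source is irreducible.

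Putting these together, the Bateman locus is an irreducible $13$-dimensional subvariety of the pure $13$-dimensional $D$, so it is dense in exactly one of the irreducible components of $D$, which we call $K$. The main potential obstacle is the appeal to Morley's classical $8$-to-$1$ theorem (Remark~\ref{rMorleyCover}); an alternative in the computational spirit of the paper would be to verify with \ttfamily Macaulay~2\rmfamily\ that the differential of $(Q,C)\mapsto b(Q^{\ast},C)$ at a specific generic pair $(Q_0,C_0)$ has rank $14$, which forces the image to be $14$-dimensional in $V(1,2)$ and hence $13$-dimensional in $\PP V(1,2)$.
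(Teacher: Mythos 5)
Your proposal is correct, but your main route to the lower bound is genuinely different from the paper's. Both arguments get the upper bound the same way (Proposition~\ref{pMorleyZeroOnBateman} places the Bateman locus inside the Pfaffian hypersurface $D$, so its dimension is at most $13$), and both get irreducibility from the fact that the locus is the image of the irreducible variety of pure tensors under $b$. For the lower bound, however, the paper works directly with the map $b$ restricted to pure tensors: it fixes the explicit pair $Q^{\ast}=e_1^2+e_2^2+e_3^2$, $C=x_1^3+x_2^3-x_3^3-(x_1+x_2+x_3)^3$, computes the tangent space to the fibre through $(Q^{\ast},C)$ (reducing it to the condition $d_{Q^{\ast}}(d_{R^{\ast}}C+l\cdot\mathrm{tr})=0$, a priori $8$ linear equations on the $9$-dimensional space of $(R^{\ast},l)$ of which only $7$ are independent), and concludes by upper semi-continuity of fibre dimension that the generic fibre is $2$-dimensional in the $16$-dimensional source, so the image is $13$-dimensional projectively. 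Your proposed \texttt{Macaulay~2} rank check is essentially this same computation in disguise. Your primary argument instead imports Morley's classical theorem (Remark~\ref{rMorleyCover}) together with the classical fact that the L\"uroth locus is a hypersurface, and converts $\dim\mathfrak{M}_L=5$ into $\dim\mathfrak{M}_B=5$ and thence into dimension $13$ for the Bateman locus via orbit counts. This is logically sound (one small caveat: you only need \emph{finite}, not trivial, generic stabilizers, and the relevant generic point is a generic \emph{Bateman} tuple rather than a generic seven-tuple; but any seven-tuple containing four points in general position has finite stabilizer, so this is harmless). What the paper's approach buys is self-containedness --- it does not lean on the degree-$54$ L\"uroth hypersurface or on Morley's $8:1$ count, and the explicit pair $(Q^{\ast},C)$ is reused in the proof of Theorem~\ref{tBateman} to show that $K$ dominates $\mathrm{Grass}(2,S^2(V))$; indeed the paper derives the identification of $K$ with the Morley hypersurface as a \emph{consequence} (Remark~\ref{rConnectionToMorley}) rather than an input. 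What your approach buys is the avoidance of any computation, at the cost of making the corollary depend on two nontrivial classical theorems.
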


\begin{proof}
Proposition \ref{pMorleyZeroOnBateman} says that the image of the pure tensors under $b\, :\, V(2, \: 0) \otimes V(0, 3) \to V(1, 2)$ is not all
of $V(1, 2)$. By upper semi-continuity of fibre dimension it thus suffices to find a pure tensor through which passes a $1$-dimensional fibre (counted
projectively, otherwise a two-dimensional fibre). This is easily accomplished as follows. Fix
\[
Q^* = e_1^2+e_2^2+e_3^2,\: C =  x_1^3+x_2^3-x_3^3-(x_1+x_2+x_3)^3\, .
\]
We compute the dimension of the tangent space to the fibre through the point $(Q^*, \: C)$ of the restriction of $b$ to pure tensors (at the point $(Q^* ,
\: C)$). Let $R^* \in V(2, \: 0)$, $D \in V(0, \: 3)$ be arbitrary, and with $\epsilon$ such that $\epsilon^2 =0$ note that the equation 
\begin{gather*}
b(Q^{\ast }, \: C) = b (Q^{\ast } + \epsilon R^{\ast }, \: C + \epsilon D )
\end{gather*}
is equivalent to
\[
\sum_{i=1}^3 \left( \frac{\partial }{\partial e_i } R^* \otimes \frac{\partial }{\partial x_i } C + \frac{\partial }{\partial e_i } Q^* \otimes
\frac{ \partial }{\partial x_i} D \right) = 0\quad \mathrm{in} \quad V(1,\: 2) . 
\]
To simplify, we will write $d_{R^*}(\cdot ) = b (R^{\ast } , \cdot )$, $d_{Q^*} (\cdot ) = b (Q^{\ast }, \cdot )$ (differentials w.r.t. these dual quadrics)
and $\mathrm{tr} = e_1 \otimes x_1 + e_2 \otimes x_2 + e_3 \otimes x_3$. Note that then the previous equation is equivalent to the existence of a linear
form $l \in (\CC^3)^{\ast }$ with 
\[
d_{R^*} C + l \cdot \mathrm{tr} = d_{Q^*} D
\]
and $d_{Q^*} (\cdot )$ may be identified with the usual differential
\[
d (\cdot ) = \sum_i \frac{\partial }{\partial x_i } (\cdot ) \otimes dx_i\, :\, S^{k} (\CC^3)^{\ast }\otimes \Lambda^l (\CC^3) \to S^{k-1} (\CC^3)^{\ast
}\otimes \Lambda^{l+1} (\CC^3)\, .
\]
Hence our sought-for tangent space is the space of $R^{\ast }$ and $l$ such that
\[
d_{Q^{\ast }} (d_{R^{\ast }} C + l \cdot \mathrm{tr} ) =0
\]
which a priori amounts to $8$ linear equations on the $9$-dimensional space of $(R^{\ast }, \: l)$ ($8$ equations because the equality lives in the kernel
of 
$d\, :\, \Lambda^2 (\CC^3) \otimes ( \CC^3)^{\ast} \to \CC$). But one can check (e.g. with \ttfamily Macaulay 2\rmfamily) that only $7$ of these are independent
whence our fibre through
$(Q^*, \: C)$ is
$2$-dimensional as desired. 
\end{proof}

\begin{remark}\xlabel{rConnectionToMorley}
It follows from the above discussion that $K$ coincides with the so-called Morley hypersurface of \cite{Mor} and \cite{O-S}.
\end{remark}

We are now in a position to prove

\begin{theorem}\xlabel{tBateman}
The moduli space $\mathfrak{M}_B$ of Bateman seven-tuples of points in $\PP^2$ is rational.
\end{theorem}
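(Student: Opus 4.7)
The plan is to combine the matrix machinery of Proposition \ref{pMorleyZeroOnBateman} with two successive applications of the no-name lemma, the first reducing the $\mathrm{SL}(V)$-quotient to a $\mathrm{PSL}_2$-quotient and the second reducing the $\mathrm{PSL}_2$-quotient to an explicit torus-plus-Weyl quotient.

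First, by Lemma \ref{lBateman}, Corollary \ref{cDimBateman} and Remark \ref{rConnectionToMorley}, $\mathfrak{M}_B$ is birational to $K/\mathrm{SL}(V)$, where $K \subset \PP V(1,2)$ is the Morley hypersurface, i.e.\ the closure of the image of pure tensors under $b\,:\, V(2,0)\otimes V(0,3) \to V(1,2)$. I would fix the non-degenerate dual quadric $Q_0^{\ast}= e_1^2+e_2^2+e_3^2$. Because $\mathrm{SL}(V)$ acts with open orbit and stabilizer $\mathrm{SO}_3 \simeq \mathrm{PSL}_2$ on $\PP V(2,0)$, the point $[Q_0^{\ast}]$ is a $(\mathrm{PGL}(V),\mathrm{SO}_3)$-section. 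Using $V(0,3)|_{\mathrm{SO}_3}=V(6)\oplus V(2)$ and $V(1,2)|_{\mathrm{SO}_3}=V(6)\oplus V(4)\oplus V(2)$, Schur's lemma identifies $b(Q_0^{\ast}, \cdot)\,:\, V(0,3) \to V(1,2)$ with an $\mathrm{SO}_3$-equivariant injection onto $V(6)\oplus V(2)\subset V(1,2)$. Hence $K$ is the $\mathrm{SL}(V)$-saturation of $\PP(V(6)\oplus V(2))$ and by the no-name lemma
\[
\bigl(\PP V(2,0)\times \PP V(0,3)\bigr)/\mathrm{SL}(V) \,\sim\, \PP(V(6)\oplus V(2))/\mathrm{PSL}_2\, .
\]

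Second, I would account for the fact that $\PP(V(6)\oplus V(2))/\mathrm{PSL}_2$ has dimension $6$, while $\mathfrak{M}_B$ has dimension $5$. The extra dimension comes from the $\PP^1$-fibration structure of Proposition \ref{pMorleyZeroOnBateman}: the incidence
\[
I := \{(v,[Q^{\ast}])\in K\times \PP V(2,0) \,:\, a(Q^{\ast},v)=0\}
\]
is a $\PP^1$-fibration over $K$ (since a generic skew $6\times 6$ matrix of corank $\geq 1$ has exactly $2$-dimensional kernel) and a projective bundle over $\PP V(2,0)$ with fibre $\PP(V(6)\oplus V(2))$ over $[Q_0^{\ast}]$. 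Quotienting by $\mathrm{SL}(V)$ yields a $\PP^1$-fibration $\PP(V(6)\oplus V(2))/\mathrm{PSL}_2 \to \mathfrak{M}_B$, so rationality of $\mathfrak{M}_B$ reduces to rationality of the total space together with the construction of a rational section of this $\PP^1$-fibration.

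Third, to show $\PP(V(6)\oplus V(2))/\mathrm{PSL}_2$ is rational, I would apply a second no-name reduction inside $\mathrm{PSL}_2$: the Cartan $\mathfrak{h}\subset V(2)=\mathfrak{sl}_2$ is a $(\mathrm{PSL}_2, N(T))$-section, so after absorbing the overall scalar the quotient becomes birational to a $T\rtimes W$-invariant of $V(6)\times \mathfrak{h}$, where $T=\mathbb{G}_m$ acts on $V(6)$ with weights $\pm 6,\pm 4,\pm 2,0$ (and trivially on $\mathfrak{h}$) and the Weyl involution $W=\mathbb{Z}/2$ negates the Cartan coordinate and sends the weight-$k$ coordinate of $V(6)$ to the weight-$(-k)$ one. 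Normalising a non-zero weight coordinate kills $T$ and produces an affine space on which $W$ acts as the product of a swap of pairs of coordinates and a sign change on the Cartan coordinate; a direct invariant-theoretic computation (symmetric-function generators plus an auxiliary mixed generator) shows the resulting invariant field is purely transcendental.

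The main obstacle is the final descent: showing that the $\PP^1$-fibration of the second step admits a rational section, so that rationality of $\PP(V(6)\oplus V(2))/\mathrm{PSL}_2$ transfers to $\mathfrak{M}_B$ (rather than only to stable rationality). The natural candidate for such a section is an $\mathrm{SL}(V)$-equivariant choice of a distinguished line in $\ker M(v)\subset V(2,0)$ for a generic Bateman point $v$; since the pencil $\PP\ker M(v)\subset \PP V(2,0)=\PP(\mathrm{Sym}^2 V)$ of plane conics generically contains exactly three singular members, one expects to produce the section by singling out the singular members (and breaking the residual $S_3$ by a further equivariant construction).
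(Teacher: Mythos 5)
Your reduction to $K/\mathrm{SL}(V)$, the slice at a fixed nondegenerate dual quadric $Q_0^{\ast}$ with stabilizer $\mathrm{SO}_3\simeq\mathrm{PSL}_2$, and the identification of the fibre over $[Q_0^{\ast}]$ with $\PP(V(6)\oplus V(2))$ are all sound. The gap is in the final descent, and it is fatal as stated. Even if you produce a rational section of the $\PP^1$-fibration $X:=\PP(V(6)\oplus V(2))/\mathrm{PSL}_2 \dasharrow \mathfrak{M}_B$, all you obtain is a birational equivalence $X\sim \mathfrak{M}_B\times\PP^1$; rationality of $X$ then says only that $\mathfrak{M}_B\times\PP^1$ is rational, i.e.\ that $\mathfrak{M}_B$ is stably rational. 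Passing from there to rationality of $\mathfrak{M}_B$ is the Zariski cancellation problem, which is not a theorem you can invoke (and stable rationality does not imply rationality in general, by Beauville--Colliot-Th\'el\`ene--Sansuc--Swinnerton-Dyer). A rational section lets rationality descend from a base to a total space, never the other way around. Moreover, your candidate section does not exist: the three singular members of the pencil $\PP\ker M(v)$ form an unordered triple on each fibre $\PP^1$, and the stabilizer in $\mathrm{PGL}_2$ of three general points of $\PP^1$ is an $\mathfrak{S}_3$ acting with no fixed point, so no equivariant construction can extract a single distinguished point from that triple. (A degree-$3$ multisection does force a conic bundle to have a rational section by Springer's theorem, so a section exists abstractly --- but, as explained, that does not repair the logic.)

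The paper sidesteps all of this by slicing one step further: instead of fixing one member of the kernel pencil, it fixes the \emph{whole} pencil, i.e.\ it considers $\varphi\colon D\dasharrow\mathrm{Grass}(2,S^2V)$, $v\mapsto\ker M(v)$. Since $\dim\mathrm{Grass}(2,S^2V)=8=\dim\mathrm{SL}_3(\CC)$ and the four base points of a generic pencil can be normalized to a frame of reference, $\mathrm{SL}(V)$ acts there with dense orbit and \emph{finite} generic stabilizer $\mathfrak{S}_4$; hence $\mathfrak{M}_B\sim F/\mathfrak{S}_4$ with $F$ the generic fibre, which is a linear $\PP^5=\PP(V_3\oplus V_3)$ in terms of $\mathfrak{S}_4$-irreducibles, and this quotient is rational as a generic vector bundle over the rational surface $\PP(V_3)/\mathfrak{S}_4$. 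The dimension count is then exact and no residual $\PP^1$ ever appears; the only additional input is that $K$ is the unique component of $D$ dominating the Grassmannian, which is checked on precisely your example $Q^{\ast}=e_1^2+e_2^2+e_3^2$, $C=x_1^3+x_2^3-x_3^3-(x_1+x_2+x_3)^3$, whose kernel pencil has smooth base locus. If you wish to salvage your slice at $[Q_0^{\ast}]$, you must further slice the residual $\PP^1$ equivariantly, which in effect reproduces the Grassmannian argument; your step involving $V(6)\oplus V(2)$ and the torus reduction then becomes unnecessary.
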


\begin{proof}

Consider the rational map
\begin{gather*}
\varphi \, : \, D \dasharrow \mathrm{Grass } (2, \: \mathrm{S}^2 (V))
\end{gather*}
assigning to $P \in D$ its kernel.

A general point of $\mathrm{Grass } (2, \: \mathrm{S}^2 (V))$ is
a pencil of (dual) quadrics in $\PP^2$ with $4$ distinct base points which in turn determine the pencil. Acting by
$\mathrm{SL}_3 (\CC )$ we may suppose that these points are the four vertices of a frame of reference in $\PP^2$ and so
$\mathrm{SL}_3 (\CC )$ acts with a dense orbit in $\mathrm{Grass } (2, \: \mathrm{S}^2 (V))$. The stabilizer of a generic point
in $\mathrm{Grass } (2, \: \mathrm{S}^2 (V) )$ inside $\mathrm{SL}_3 (\CC )$ is the symmetric group $\mathfrak{S}_4$.

\

CLAIM: The Morley hypersurface $K$ is the unique irreducible component of $D$ which dominates $\mathrm{Grass } (2, \: \mathrm{S}^2 (V))$.

\

To prove the Claim, it suffices to exhibit  a Bateman seven-tuple of points such that the kernel of the skew-matrix determined
by this and by $m$ consists of two general conics. One can check  that for our previous choice of $Q^{\ast }$ and $C$
from Corollary \ref{cDimBateman}
\[ Q^* = e_1^2+e_2^2+e_3^2,\: C =  x_1^3+x_2^3-x_3^3-(x_1+x_2+x_3)^3\]
we get a Bateman $7$-tuple which when substituted into $M$ of formula \ref{FormulaM} yields the pencil 
\[ \langle e_1^2+e_2^2+e_3^2,\:  {e}_{1}^{2}-{e}_{1} {e}_{2}+{e}_{2}^{2}+{e}_{1} {e}_{3}+{e}_{2}
       {e}_{3}\rangle \]
with smooth base locus, i.e. $(1:\pm i\sqrt{2}:1)$ and $(\pm i\sqrt{2}:1:1)$.
 
\

Hence $F/\mathfrak{S}_4$, where $F$ is the generic fibre of $\varphi$, is birational to $\mathfrak{M}_B$ by Corollary \ref{cDimBateman} and the
preceding discussion. We will determine $F$ as $\mathfrak{S}_4$-representation. The irreducible representations of $\mathfrak{S}_4$ are $1$
(trivial),
$\epsilon$ (sign representation), 
$V_2$ ($2$-dimensional induced from permutation representation of $\mathfrak{S}_3$), $V_3$ ($3$-dimensional permutation representation),
$V_3' = V_3 \otimes\epsilon$. Then
\[
S^2 (V) = V_3 + V_2 + 1
\]
whence
\[
\Lambda^2 V(2, \: 0) = \Lambda^2 (V_3 + V_2 + 1) = \Lambda^2 V_2 + V_2 \otimes (V_3 +1) + \Lambda^2 (V_3 +1 )\, .
\]
We get $F = \Lambda^2 (V_3 +1) = \Lambda^2 V_3 + V_3 \otimes 1 = V_3 + V_3$. $\PP (V_3 + V_3)/\mathfrak{S}_4$ is rational since it is
generically a vector bundle over $\PP (V_3 )/\mathfrak{S}_4$ which is a unirational surface. This completes the proof of the Theorem.
\end{proof}

\end{document}